
\documentclass[12pt]{article}
\usepackage[utf8]{inputenc}
\usepackage{geometry}
\geometry{letterpaper} 
\usepackage{array} 
\usepackage{paralist} 
\usepackage{verbatim} 
\usepackage{subfig} 

\usepackage{tikz}
\usetikzlibrary{positioning,chains,fit,shapes,calc}
\definecolor{myred}{RGB}{160,80,80}
\definecolor{myblue}{RGB}{80,80,160}

\usepackage{amsmath}
\usepackage{amssymb}
\usepackage{amsthm}
\newtheorem{definition}{Definition}
\newtheorem{conjecture}{Conjecture}
\newtheorem{theorem}{Theorem}
\newtheorem{proposition}{Proposition}
\newtheorem{corollary}{Corollary}

\usepackage{sectsty}
\allsectionsfont{\sffamily\mdseries\upshape} 

\usepackage[nottoc,notlof,notlot]{tocbibind} 
\usepackage[titles,subfigure]{tocloft} 


\usepackage{url}



\title{A conjectured bound on the spanning tree number of bipartite graphs}
\author{
  Michael Slone \\
}

\begin{document}
\maketitle

\begin{abstract}
The Ferrers bound conjecture is a natural graph-theoretic extension of the enumeration of spanning trees for Ferrers graphs.  We document the current status of the conjecture and provide
a further conjecture which implies it.
\end{abstract}

\section{Introduction}

Ferrers graphs are a bipartite analogue of threshold graphs.  
They were introduced by Hammer, Peled, and Srinivasan~\cite{HPS90}, 
where they were called \emph{difference graphs}.
Ferrers graphs and threshold graphs are both realizations of
\emph{Ferrers digraphs}, which were introduced
by Riguet~\cite{Riguet51}.  Threshold graphs and Ferrers graphs obey many analogies.
For example, in the polytope of degree sequences, the 
extreme points correspond to threshold graphs, while in the
polytope of bipartite degree sequences, the extreme points correspond to Ferrers graphs~\cite{PS89b}.

Ehrenborg and van Willigenburg~\cite{Ferrers} studied 
Ferrers graphs and proved that the spanning tree number 
of a Ferrers graph depends only on its degree sequence and
the size of each color class.  This defines a \emph{Ferrers invariant}
for any bipartite graph.  In 2006, Ehrenborg conjectured that
the Ferrers invariant is an upper bound for the spanning tree
number of any bipartite graph.  The purpose of this note is
to document this conjecture and provide some evidence which
suggests the conjecture is reasonable.

First, let us establish some definitions and notation.  The equivalency of the following conditions is demonstrated in~\cite{HPS90}.

\begin{definition}[\cite{HPS90}]
Let $G = (V, E)$ be a bipartite graph with bipartition $V = X \sqcup Y$.  Then $G$ is a \emph{Ferrers graph} if and only if any of the following equivalent conditions hold:
\begin{itemize}
\item
There exist a vertex weighting $w$ and a real number $T$ such that for any $u\ne v\in V$, the vertices $u$ and $v$ are adjacent if and only if $|w(u) - w(v)| \ge T$.

\item
The graph $G'$ constructed from $G$ by adding all possible edges between vertices in $X$ is a threshold graph.

\item
The graph $G$ contains no induced $2K_2$.

\item
The neighborhoods of vertices in $X$ are linearly ordered by inclusion.

\item
The degree sequences for vertices in $X$ and vertices in $Y$ are conjugate.

\end{itemize}
\end{definition}



A sample Ferrers graph is depicted in Figure~\ref{fig:ferrers}.

\begin{figure}[!h]
\caption{The Ferrers graph corresponding to the conjugate degree sequences $(3, 3, 2, 1)$ and $(4, 3, 2)$.}
\label{fig:ferrers}
\centering
\definecolor{myred}{RGB}{160,80,80}
\definecolor{myblue}{RGB}{80,80,160}

\begin{tikzpicture}[thick,
  every node/.style={draw,circle},
  fsnode/.style={fill=black},
  ssnode/.style={fill=black},
  every fit/.style={ellipse,draw,inner sep=-2pt,text width=2cm},
  shorten >= 3pt,shorten <= 3pt
]

\begin{scope}[start chain=going right,node distance=14mm]
\node[fsnode, on chain] (a) {};
\node[fsnode, on chain] (b) {};
\node[fsnode, on chain] (c) {};
\node[fsnode, on chain] (d) {};
\end{scope}

\begin{scope}[yshift=-2cm,start chain=going right,node distance=14mm]
\node[fsnode, on chain] (e) {};
\node[fsnode, on chain] (f) {};
\node[fsnode, on chain] (g) {};
\end{scope}

\draw (a) -- (e);
\draw (a) -- (f);
\draw (a) -- (g);
\draw (b) -- (e);
\draw (b) -- (f);
\draw (b) -- (g);
\draw (c) -- (f);
\draw (c) -- (g);
\draw (d) -- (g);
\end{tikzpicture}
\end{figure}
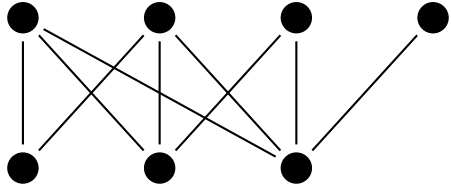

\nocite{Cog82}
\nocite{pak1990enumeration}

Let $T(G)$ denote the spanning tree number of the graph $G$.

\begin{definition}
Let $G = (V, E)$ be a bipartite graph with bipartition $V = X\sqcup Y$.  The \emph{Ferrers invariant} of $G$ is the quantity
\[
	F(G) = \frac{1}{|X| |Y|}\prod_{v\in V}\deg(v).
\]
\end{definition}

\noindent
Ehrenborg and van Willigenburg proved~\cite[Theorem 2.1]{Ferrers} that for Ferrers graphs, $T(G) = F(G)$.

\begin{conjecture}[Ferrers bound conjecture]\label{conj:fbc}
Let $G = (V, E)$ be a bipartite graph with bipartition $V = X\sqcup Y$.  Then
\[
 T(G) \le \frac{1}{|X| |Y|}\prod_{v\in V}\deg(v),
\]
that is, $T(G) \le F(G)$.
\end{conjecture}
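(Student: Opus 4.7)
The natural attack is a degree-preserving monotonicity argument. The critical observation is that the Ferrers invariant $F(G)$ depends only on the bipartite degree sequence of $G$ together with $|X|$ and $|Y|$, so if $T(G)$ could be shown to be maximized, over all bipartite graphs realizing a fixed degree sequence, by a Ferrers realization (when one exists), then the conjectured bound would follow immediately from the theorem of Ehrenborg and van Willigenburg quoted above.

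The tool I would use is the \emph{$2$-switch}: replace an induced matching $\{x_1 y_1, x_2 y_2\}$ with $x_1 y_2, x_2 y_1 \notin E$ by the pair $\{x_1 y_2, x_2 y_1\}$. A $2$-switch preserves every vertex degree, and it is classical that any two bipartite graphs with the same bipartite degree sequence are connected by a sequence of $2$-switches. Since a bipartite graph is Ferrers exactly when it contains no induced $2K_2$, the Ferrers realization of a conjugate pair of degree sequences is the unique terminus (up to relabeling) of any chain of switches that consistently destroys induced $2K_2$ configurations.

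The central lemma I would need is that a $2$-switch directed toward Ferrers form does not decrease the spanning tree count. By the matrix-tree theorem, $T(G) = \det L_0$ for the reduced Laplacian $L_0$, and the switch modifies $L$ by a rank-two symmetric perturbation:
\[
L' - L = uv^T + vu^T, \qquad u = e_{x_1} - e_{x_2}, \qquad v = e_{y_1} - e_{y_2}.
\]
I would expand $\det(L_0 + uv^T + vu^T)$ via the matrix determinant lemma, aiming to express $T(G') - T(G)$ in terms of two-forest counts or effective resistances in $G$, and then appeal to the assumed nesting bias in the degrees of the four switched vertices to conclude the desired inequality.

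I expect the local determinantal inequality to be the main obstacle: it is entirely plausible that there exist switches in the ``correct'' direction that nevertheless strictly decrease $T$, in which case one would need either a more delicate choice of switching sequence or a global potential function to control the cumulative effect. A secondary obstacle is that when the two degree sequences are not conjugate, no Ferrers graph realizes them, and the switching chain must terminate at a ``nearly Ferrers'' graph; for that case one would need a perturbative argument comparing $G$ to an adjacent Ferrers realization with slightly modified degrees, or an entirely separate Hadamard-style bound on the relevant Laplacian minor. The resilience of this conjecture to direct attack suggests that either the switching framework requires a subtle invariant beyond mere count of induced $2K_2$'s, or that a fundamentally different approach is needed.
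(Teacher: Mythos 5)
This statement is a \emph{conjecture}, and the paper offers no proof of it: it remains open, with only computational verification (bipartite graphs on at most $13$ vertices) and special cases (Ferrers graphs, trees, even cycles, graphs with cutvertices via Proposition~\ref{prop:cutvertex}) known. Your proposal is therefore not being measured against a proof in the paper but against the standard of being a complete argument, and it is not one --- you concede as much yourself. There are two genuine gaps. The first is the central lemma, that a $2$-switch ``toward Ferrers form'' cannot decrease $T(G)$: you state it as something you ``would need'' and then immediately observe it is ``entirely plausible'' that it fails. Nothing in the proposal rules this out, and the matrix-determinant-lemma expansion of $\det(L_0 + uv^T + vu^T)$ produces correction terms (sums over $2$-forests separating the four switched vertices) whose signs are not controlled by any degree or nesting data you have identified. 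Without that monotonicity, the entire switching chain proves nothing.

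The second gap is more structural and, I think, fatal to the strategy as stated. When the two degree sequences $a$ and $b$ are conjugate ($a = b^*$), the bipartite realization is \emph{unique}: the Y-vertex of maximum degree must be joined to every X-vertex of positive degree, and induction forces the Ferrers graph. So in the one case where your switching chain has a Ferrers graph as its terminus, there is nothing to switch --- the starting graph already equals the terminus, and the conjecture is the Ehrenborg--van Willigenburg theorem. In every other case (which, by Theorem~\ref{thm:gale-ryser}, is exactly when $a \prec b^*$ strictly), no Ferrers graph realizes the degree sequence, the chain has no canonical endpoint, and $F(G)$ is not attained by any graph with those degrees. The entire content of the conjecture lives in this second regime, which your proposal defers to ``a perturbative argument'' or ``an entirely separate Hadamard-style bound'' without supplying either. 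A reduction to the degree-sequence-extremal realization could in principle still be useful --- one would compare the maximizer of $T$ over realizations of a fixed degree sequence against $F$ directly --- but that comparison, not the switching, is where the difficulty of the conjecture actually resides.
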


\noindent
Let us call a bipartite graph $G$ \emph{Ferrers-good} (respectively \emph{Ferrers-bad}) if $T(G) \le F(G)$ (respectively $T(G) > F(G)$).  Thus Conjecture~\ref{conj:fbc} may be expressed more briefly as the claim that all bipartite graphs are Ferrers-good.

For definitions and notation for majorization-related terms, including reordering of terms,
we follow Marshall, Olkin, and Arnold~\cite{arnold2011inequalities}.

\section{Status of the conjecture}
\nocite{cf2012counting}

Conjecture~\ref{conj:fbc} is trivially true for 
disconnected graphs and all Ferrers graphs.

In 2009, Jack Schmidt (personal communication) computationally verified by an exhaustive search that all bipartite graphs on at most $13$ vertices are Ferrers-good.

In 2013, Praveen Venkataramana proved an inequality weaker than Conjecture~\ref{conj:fbc} valid for all bipartite graphs.  

\begin{proposition}[Venkataramana, unpublished]
Let $G$ be a bipartite graph with red vertices having degrees $d_1,\dots, d_p$ and blue vertices having degrees $e_1,\dots, e_q$.
Then
\[
   T(G) 
\le 
  \prod_{i=2}^p \left(d_i + \frac{1}{2}\right) 
  \prod_{j=2}^q \left(e_i +\frac{1}{2}\right) 
  \sqrt{e_1}
\]
\end{proposition}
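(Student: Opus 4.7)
The plan is to apply the Matrix-Tree Theorem to express $T(G)$ as a determinant of a reduced Laplacian, and then bound this determinant using Hadamard's inequality together with a rank-one positive shift and the AM--GM estimate $\sqrt{d(d+1)} \le d + \tfrac{1}{2}$.

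First, I would delete the row and column of $L$ corresponding to the red vertex of degree $d_1$, leaving a positive definite matrix $L_0$ with diagonal entries $d_2, \ldots, d_p, e_1, \ldots, e_q$ and $\det L_0 = T(G)$. A direct Hadamard estimate gives only $T(G) \le \prod_{i\ge 2} d_i \prod_j e_j$, lacking both the $+\tfrac{1}{2}$ shifts and the square root. To produce the shifts I would use monotonicity of $\det$ under positive semidefinite perturbations: for any rank-one PSD matrix $\tfrac{1}{2} ww^\top$, $\det(L_0) \le \det(L_0 + \tfrac{1}{2} ww^\top)$. Choosing $w$ to have a $1$ at every index except that of $y_1$, which gets a $0$, and applying Hadamard to the shifted matrix yields
\[
T(G) \;\le\; e_1 \prod_{i=2}^p \left(d_i + \tfrac{1}{2}\right) \prod_{j=2}^q \left(e_j + \tfrac{1}{2}\right),
\]
a bound that is weaker than the target by a factor of $\sqrt{e_1}$.

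The main obstacle is then to sharpen the leading $e_1$ to $\sqrt{e_1}$; I expect this to require a finer treatment of the vertex $y_1$. One natural approach is a Schur-complement decomposition: reordering so that $y_1$'s row and column come last, one has $L_0 = \begin{pmatrix} A & v \\ v^\top & e_1 \end{pmatrix}$ and $\det(L_0) = e_1 \det(A - e_1^{-1} vv^\top)$, and bounding $\det(A - e_1^{-1}vv^\top)$ by a shifted-Hadamard estimate on its $p+q-2$ diagonal positions should leave only $\sqrt{e_1}$, not $e_1$, in the overall bound. Alternatively, a Cauchy--Schwarz argument on $T(G)^2 = \det(L_0) \det(L_0')$ for a second reduced Laplacian $L_0'$ chosen so that its $y_1$-diagonal is absent would contribute a net factor of $e_1$ (rather than $e_1^2$), and extracting the square root would then give $\sqrt{e_1}$. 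In either approach, residual geometric means of the form $\sqrt{d_i(d_i+1)}$ and $\sqrt{e_j(e_j+1)}$ are converted to $d_i + \tfrac{1}{2}$ and $e_j + \tfrac{1}{2}$ by AM--GM, recovering the stated bound.
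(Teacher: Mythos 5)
The paper does not actually contain a proof of this proposition---it is cited as an unpublished result of Venkataramana---so your attempt can only be judged on its own merits, and as it stands it has a genuine gap. Your setup is fine: the Matrix--Tree theorem gives $T(G)=\det L_0$ with diagonal $d_2,\dots,d_p,e_1,\dots,e_q$, the monotonicity $\det L_0\le\det(L_0+\tfrac12 ww^\top)$ is valid, and the intermediate bound $T(G)\le e_1\prod_{i\ge2}(d_i+\tfrac12)\prod_{j\ge2}(e_j+\tfrac12)$ is correct (indeed it already follows from plain Hadamard, since that gives the smaller quantity $\prod_{i\ge 2}d_i\prod_j e_j$). But the entire content of the proposition is the replacement of $e_1$ by $\sqrt{e_1}$, and that step is exactly the one you leave as a sketch; neither of your two proposed routes closes it.

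Concretely, test both routes on $K_{p,p}$, where $T=p^{2p-2}$ and the target bound is $(p+\tfrac12)^{2p-2}\sqrt{p}\approx e\sqrt{p}\cdot T$. For the Schur-complement route, the vector $v$ has a $-1$ only at the red neighbours of $y_1$, so each diagonal entry of $A-e_1^{-1}vv^\top$ drops by at most $1/e_1$ and the \emph{total} drop is at most $1$; a Hadamard bound on that diagonal can therefore gain only a bounded constant factor over $\prod_{i\ge2}d_i\prod_{j\ge2}e_j$, never a factor of $1/\sqrt{e_1}$. On $K_{p,p}$ this route yields roughly $p\cdot T$, which exceeds the target by about $\sqrt{p}/e$. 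For the Cauchy--Schwarz route, deleting $y_1$ instead of $x_1$ removes $e_1$ from the second diagonal but introduces $d_1$, so $T^2\le d_1e_1\prod_{i\ge2}d_i^2\prod_{j\ge2}e_j^2$ and the square root carries an unaccounted $\sqrt{d_1}$; absorbing $\sqrt{d_1}$ into the $+\tfrac12$ shifts would require $\sqrt{d_1}\le\prod_{i\ge2}(1+\tfrac{1}{2d_i})\prod_{j\ge2}(1+\tfrac{1}{2e_j})$, which fails on $K_{p,p}$ (left side $\sqrt{p}$, right side about $e$). So the key idea that produces $\sqrt{e_1}$ is still missing, and it must come from somewhere other than a diagonal (Hadamard-type) estimate after these manipulations.
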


In 2014, Garrett and Klee~\cite{gk2014} proved
that Conjecture~\ref{conj:fbc} is equivalent to an inequality on a
particular homogeneous polynomial.  They used this to verify the 
conjecture for trees and all bipartite graphs on at most $11$ 
vertices.

In his 2016 senior thesis, Koo~\cite{k2016}
summarized what was then known about Conjecture~\ref{conj:fbc}.  He proved that
even cycles are Ferrers-good and that the
operation of connecting two graphs by a new
edge preserves Ferrers-goodness.  Moreover, he showed that Conjecture~\ref{conj:fbc} holds for a sufficiently edge-dense graph with a cutvertex of degree 2.

\begin{proposition}[Theorem 5.12 in~\cite{k2016}]
Let $G = (V, E)$ be a bipartite graph with bipartition $V = X \sqcup Y$.  Suppose that $G$ has a cutvertex of degree 2 and furthermore that
\[
\frac{|E|}{|X||Y|} \ge 0.544.
\]
Then $G$ is Ferrers-good.
\end{proposition}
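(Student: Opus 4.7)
Let $v$ be the cutvertex of degree 2; since $G$ is bipartite and $\deg(v)=2$, we may assume without loss of generality that $v\in X$ and that $v$'s neighbors are $a,b\in Y$. Removing $v$ disconnects $G$ into two connected subgraphs $H_1\ni a$ and $H_2\ni b$, and both edges $va$ and $vb$ must lie in every spanning tree of $G$, which yields the multiplicative identity $T(G)=T(H_1)\,T(H_2)$. Writing $\alpha=\deg_{H_1}(a)$, $\beta=\deg_{H_2}(b)$, and $P_i=\prod_{u\in V(H_i)}\deg_{H_i}(u)$, the only degrees in which $G$ disagrees with the $H_i$ are at $v$, $a$, $b$, and a direct bookkeeping gives
\[
F(G) \;=\; \frac{2(\alpha+1)(\beta+1)\,P_1\,P_2}{\alpha\,\beta\,|X|\,|Y|}.
\]
The goal $T(G)\le F(G)$ thus reduces to bounding $T(H_1)\,T(H_2)/(P_1 P_2)$ by an explicit function of $\alpha$, $\beta$, $|X|$, and $|Y|$.

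Since the Ferrers bound conjecture is not yet available for the smaller graphs $H_1, H_2$, I would apply Venkataramana's unconditional inequality to each piece, expressing $T(H_i)$ as the degree product $P_i$ times a multiplicative correction of the form $\prod(1+1/(2d))$ together with a single $\sqrt{e_1}$ factor. After cancelling $P_1 P_2$ on both sides, the remaining inequality depends only on the sorted degree sequences of $H_1$ and $H_2$ and on the part sizes. Majorization arguments in the style of Marshall--Olkin--Arnold (cited above) then allow one to replace the sorted-degree data by the single parameter of the average degree, which is governed by the edge density $\delta := |E|/(|X|\,|Y|)$.

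The final step is a quantitative optimization: find the least $\delta$ for which the Venkataramana correction factors, averaged under the density constraint, are dominated by the savings $(1+1/\alpha)(1+1/\beta)/(\alpha\beta\,|X|\,|Y|)$ produced by the cutvertex decomposition. This yields the numerical threshold $\delta\ge 0.544$ appearing in the statement. The cutvertex decomposition and the computation of $F(G)$ above are essentially mechanical; the main obstacle is the optimization step, where the overhead from Venkataramana's bound must be controlled sharply enough not to overwhelm the gains from the degree-2 cutvertex. The constant $0.544$ is not a rough estimate but the critical density produced by balancing these two contributions.
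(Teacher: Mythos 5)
The paper does not actually prove this proposition; it is quoted from Koo's senior thesis (Theorem 5.12 of \cite{k2016}), so there is no in-paper argument to compare yours against. Judged on its own terms, your opening reduction is correct and worth keeping: a degree-$2$ cutvertex $v$ with neighbors $a,b$ forces the edges $va$ and $vb$ to be bridges, so $T(G)=T(H_1)\,T(H_2)$ for the two components $H_1\ni a$, $H_2\ni b$ of $G-v$, and your formula $F(G)=2(\alpha+1)(\beta+1)P_1P_2/(\alpha\beta|X||Y|)$ is right. Everything after that, however, is a plan rather than a proof, and the plan has a concrete obstruction. Venkataramana's bound exceeds the Ferrers invariant of each piece by a factor of order $\frac{|X_i||Y_i|}{d_1\sqrt{e_1}}\prod_{i\ge 2}\bigl(1+\tfrac{1}{2d_i}\bigr)\prod_{j\ge 2}\bigl(1+\tfrac{1}{2e_j}\bigr)$, a product of nearly $|V(H_i)|$ terms each exceeding $1$, which can be exponentially large in the number of vertices; by contrast, the ``savings'' produced by the cutvertex decomposition, namely $F(G)/(F(H_1)F(H_2))=\frac{2(\alpha+1)(\beta+1)}{\alpha\beta}\cdot\frac{|X_1||X_2||Y_1||Y_2|}{|X||Y|}$, is only polynomial in $|X|$ and $|Y|$. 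The hypothesis $|E|/(|X||Y|)\ge 0.544$ constrains only the average degree: it still permits a constant fraction of the vertices to have degree $1$, each contributing a factor $\tfrac{3}{2}$ to the overhead, so for any fixed density threshold below $1$ the Venkataramana route cannot close.

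The second gap is that the constant $0.544$ is asserted rather than derived. You state that it is ``the critical density produced by balancing these two contributions,'' but you exhibit no computation producing that number, and by the argument above the balance you describe cannot be struck at any density, so the claim that this optimization ``yields the numerical threshold'' is unsupported. A workable proof must exploit the density hypothesis in a way that controls $T(H_1)T(H_2)$, or $T(G)$ directly, more sharply than by invoking a bound that is itself far weaker than the Ferrers bound being targeted; you should consult Koo's thesis to see how the constant actually arises before attributing a mechanism to it.
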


Multiple authors have noted that Ferrers-goodness is preserved under the operation of adding pendant vertices.  The following proposition is somewhat stronger and requires little additional work.

\begin{proposition}\label{prop:cutvertex}
Let $G_1$ and $G_2$ be disjoint Ferrers-good graphs, and let $G$ be the graph formed by gluing $G_1$ and $G_2$ along an arbitrary vertex.  Then $G$ is also Ferrers-good.
\end{proposition}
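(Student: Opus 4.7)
My plan is to combine the classical multiplicativity of the spanning tree number across a single cut vertex with a direct comparison of Ferrers invariants. Let $v$ be the shared vertex. Any spanning tree of $G$ restricts to a pair of spanning trees, one on each $G_i$: restriction is acyclic, and it is connected because $v$ is the unique vertex common to both sides, so any tree path between two vertices of $V(G_i)$ must stay inside $G_i$. Conversely, the union of two spanning trees of $G_1, G_2$ is a spanning tree of $G$. Hence $T(G) = T(G_1)\,T(G_2)$, and the Ferrers-goodness of each $G_i$ reduces the entire problem to proving $F(G_1)F(G_2) \le F(G)$.

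I would next dispose of the degenerate case: if $\deg_{G_i}(v) = 0$ for some $i$, then either $G$ is disconnected (so $T(G) = 0$) or $G_i$ is the isolated vertex $v$ and $G = G_{3-i}$. Otherwise, since each bipartition is determined only up to swapping color classes in each connected component, relabel so that $v$ lies in the ``$X$'' class of both $G_i$. Set $a_i = |X_i|$, $b_i = |Y_i|$, and $d_i = \deg_{G_i}(v)$; then $|X(G)| = a_1 + a_2 - 1$, $|Y(G)| = b_1 + b_2$, and $\deg_G(v) = d_1 + d_2$, while every other vertex keeps its $G_i$-degree. Factoring $v$'s contribution out of the degree product on each side, the inequality $F(G_1)F(G_2) \le F(G)$ simplifies to
\[
  d_1 d_2\,(a_1 + a_2 - 1)(b_1 + b_2) \;\le\; (d_1 + d_2)\,a_1 a_2 b_1 b_2.
\]

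To close out, I would apply two soft estimates. Since the neighbors of $v$ in $G_i$ lie in $Y_i$, we have $d_i \le b_i$, and monotonicity of the harmonic mean gives
\[
  d_1 d_2 (b_1 + b_2) \;\le\; (d_1 + d_2)\,b_1 b_2.
\]
Multiplying both sides by $a_1 + a_2 - 1$ and then using the trivial bound $a_1 + a_2 - 1 \le a_1 a_2$, equivalent to $(a_1 - 1)(a_2 - 1) \ge 0$, finishes the argument.

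The main conceptual hurdle is recognizing how to split $F(G)/(F(G_1)F(G_2))$ so that the residual inequality cleanly decomposes into a ``$Y$-side'' estimate from the degree bound $d_i \le b_i$ and an ``$X$-side'' estimate from the arithmetic inequality on $a_1, a_2$. Once this split is identified, neither estimate is difficult, and the gluing identity $T(G) = T(G_1)T(G_2)$ does the rest.
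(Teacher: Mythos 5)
Your proposal is correct and follows essentially the same route as the paper: the multiplicativity $T(G)=T(G_1)T(G_2)$ at the cut vertex, reduction to $F(G_1)F(G_2)\le F(G)$, the harmonic-mean estimate $\frac{d_1d_2}{d_1+d_2}\le\frac{b_1b_2}{b_1+b_2}$ from $d_i\le b_i$, and the bound $a_1+a_2-1\le a_1a_2$. Your explicit handling of the degenerate case $\deg_{G_i}(v)=0$ and the relabeling so that $v$ lies in the same color class on both sides are minor points the paper leaves implicit.
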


\begin{proof}
For each $i$, let $G_i = (V_i, E_i)$ have bipartition $V_i = X_i \sqcup Y_i$, and let $x_i\in X_i$ be arbitrary.  Let $G = (V, E)$ be the graph formed from $G_1 \cup G_2$ by identifying $x_1$ with $x_2$, calling the identified vertices $x$.  Thus $G$ has 
bipartition $V = X \sqcup Y$, where $X = X_1 \cup X_2 \cup \{x\} \setminus \{x_1, x_2\}$ and $Y = Y_1 \cup Y_2$.

By construction, $T(G) = T(G_1) \cdot T(G_2)$.
Since each $G_i$ is Ferrers-good, it follows that
\[
T(G)
\le F(G_1)\cdot F(G_2) 
= \frac{\deg_{G_1}(x_1)\deg_{G_2}(x_2)\prod_{v\in V \setminus \{x\}} \deg_G(v)}{|X_1| |X_2| |Y_1| |Y_2|}.
\]
Since $0<\deg_{G_i}(x_i)\le |Y_i|$ and $\deg_G(x) = \deg_{G_1}(x_1) + \deg_{G_2}(x_2)$, it follows that
\[
\frac{\deg_{G_1}(x_1)\deg_{G_2}(x_2)}{\deg_G(x)}
\le
\frac{|Y_1||Y_2|}{|Y|}
\le
\frac{|X_1||X_2|}{|X|}\cdot\frac{|Y_1||Y_2|}{|Y|},
\]
that is,
\[
\frac{\deg_{G_1}(x_1)}{|X_1||Y_1|}\cdot
\frac{\deg_{G_2}(x_2)}{|X_2||Y_2|}
\le
\frac{\deg_G(x)}{|X||Y|}.
\]
Since $\deg_{G_i}(v) = \deg_G(v)$ for any $v\ne x_i$,
\[
F(G_1)\cdot F(G_2)
\le
\frac{1}{|X||Y|}\prod_{v\in V}\deg_G(v)
= F(G).
\]
Hence $G$ is also Ferrers-good.
\end{proof}

\noindent
The following corollary may be useful.

\begin{corollary}\label{cor:cutvertex}
A minimal Ferrers-bad graph is $2$-connected.
\end{corollary}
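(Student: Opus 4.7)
The plan is to use Proposition~\ref{prop:cutvertex} as a lemma and argue by contradiction. Suppose $G$ is a minimal Ferrers-bad graph, where ``minimal'' is taken with respect to vertex count (or, equivalently for our purposes, any order in which the decomposition below produces strictly smaller graphs). First I would observe that $G$ must be connected, since the statement in the preceding section notes that the conjecture is trivially true for disconnected graphs, so every disconnected bipartite graph is Ferrers-good.

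Next, assume for contradiction that $G$ has a cutvertex $v$. Then $V(G) \setminus \{v\}$ has at least two components, so I can partition the components into two nonempty groups and let $G_1$ and $G_2$ be the subgraphs of $G$ induced by each group together with $v$. By construction, $G = G_1 \cup G_2$, $V(G_1) \cap V(G_2) = \{v\}$, and each $G_i$ is a proper subgraph of $G$ on strictly fewer vertices. Since $G$ is bipartite, so are $G_1$ and $G_2$, and by minimality of $G$ each $G_i$ is Ferrers-good.

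Now I would apply Proposition~\ref{prop:cutvertex} to conclude that $G$, obtained by gluing $G_1$ and $G_2$ along $v$, is Ferrers-good. This contradicts the assumption that $G$ is Ferrers-bad. Hence $G$ has no cutvertex, and combined with connectedness this means $G$ is $2$-connected.

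The only minor wrinkle is that Proposition~\ref{prop:cutvertex} is stated with $x_i \in X_i$, i.e.\ the identified vertex lying in the ``$X$-side'' of each bipartition. This is resolved by freely swapping the labels of the two color classes of either $G_1$ or $G_2$ if needed, so that $v$ lies in $X_1$ and in $X_2$. Since this is a purely cosmetic relabeling, no real obstacle arises, and the argument goes through.
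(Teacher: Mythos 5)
Your argument is correct and follows essentially the same route as the paper: decompose $G$ at a cutvertex into two smaller pieces, invoke minimality to get Ferrers-goodness of each piece, and apply Proposition~\ref{prop:cutvertex} for the contradiction. The extra details you supply (connectedness of a minimal Ferrers-bad graph and the relabeling of color classes so the cutvertex lies in $X_1$ and $X_2$) are correct and only make explicit what the paper's terser proof leaves implicit.
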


\begin{proof}
Let $G$ be a minimal Ferrers-bad graph.
If $G$ had a cutvertex $x$, we could decompose it 
into subgraphs $G_1$ and $G_2$ glued along 
the vertex $x$.  By minimality, $G_1$ and $G_2$
are Ferrers-good.  Now apply Proposition~\ref{prop:cutvertex}.
\end{proof}

For a Ferrers-good graph $G$ the following inequality holds:
\begin{eqnarray}\label{eqn:conj2}
 \frac{1}{n}\prod_{i=1}^{n-1}\lambda_i
    &\le
  &\frac{1}{pq}\prod_{i=1}^n d_i,
\end{eqnarray}
where $d$ is the degree sequence of $G$ and
$\lambda$ is the spectrum of the Laplacian.
These possible values of $d$ and $\lambda$ are known to be restricted by 
the Gale--Ryser theorem and the Grone--Merris conjecture, proved by Bai~\cite{bai2011grone}.

\begin{theorem}[Gale~\cite{gale1957theorem}, Ryser~\cite{ryser1957combinatorial}]\label{thm:gale-ryser}
Let $a$ and $b$ be partitions of an integer.  There
is a bipartite graph whose blue degree sequence is $a$
and whose red degree sequence is $b$ if and only if
$a \preceq b^*$.
\end{theorem}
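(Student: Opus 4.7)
The plan is to prove the two implications separately. For necessity I would use a double-counting argument; for sufficiency I would induct on the number of blue vertices via a greedy construction analogous to the Havel--Hakimi algorithm.

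For necessity, suppose $G$ is a bipartite graph realizing $a$ and $b$, both sorted weakly decreasingly. Fix $k$ and count the edges incident to the top $k$ blue vertices: on one side this sum equals $\sum_{i=1}^{k} a_i$, while on the other each red vertex $j$ contributes at most $\min(b_j,k)$ such edges. Summing and applying the identity $\sum_{j} \min(b_j,k) = \sum_{i=1}^{k} b_i^*$ (which counts cells of the Ferrers diagram of $b$ in the leftmost $k$ columns, once row-by-row and once column-by-column) yields $\sum_{i=1}^{k} a_i \le \sum_{i=1}^{k} b_i^*$; combining this with the equal totals $\sum a_i = |E(G)| = \sum b_i^*$ gives $a \preceq b^*$.

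For sufficiency, I would induct on $p=|X|$. After sorting, set aside the first blue vertex and connect it to the $a_1$ red vertices corresponding to the largest entries of $b$; this is feasible because $a_1 \le b_1^*$ is the number of positive entries in $b$. Let $a' = (a_2,\dots,a_p)$ and let $\tilde b$ be the sequence obtained by decrementing those top $a_1$ entries of $b$ and re-sorting. The key lemma to establish is that $a' \preceq \tilde b^*$, after which the inductive hypothesis produces a realization of $a'$ and $\tilde b$, which together with the $a_1$ edges just placed yields a realization of $a$ and $b$.

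The main obstacle is verifying the key lemma. The Ferrers diagram of $\tilde b$ differs from that of $b$ by deleting the rightmost cell of each of the first $a_1$ rows, so a short computation of conjugates gives
\[
\sum_{j=1}^{k} \tilde b_j^{\,*} \;=\; \sum_{j=1}^{k} b_j^* \;-\; a_1 \;+\; \min(a_1,\, b_{k+1}^*).
\]
Substituting this into the desired inequality $\sum_{i=1}^{k} a_i' \le \sum_{j=1}^{k} \tilde b_j^{\,*}$ and using $\sum_{i=1}^{k} a_i' = \sum_{i=1}^{k+1} a_i - a_1$, the argument splits on whether $a_1 \le b_{k+1}^*$: one case reduces immediately to the hypothesis $\sum_{i=1}^{k+1} a_i \le \sum_{i=1}^{k+1} b_i^*$, while the other follows from $\sum_{i=1}^{k} a_i \le \sum_{i=1}^{k} b_i^*$ together with the trivial bound $a_{k+1} \le a_1$. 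Equality of sums $\sum a_i' = \sum \tilde b_i$ is automatic from the construction, completing the reduction to the inductive step.
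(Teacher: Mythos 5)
The paper does not prove this statement: it is quoted as a classical theorem and attributed to Gale and Ryser, so there is no in-paper argument to compare against. Judged on its own, your proof is the standard one and is correct. The necessity direction is the usual double count of edges between the top $k$ blue vertices and the red side, using $\sum_j \min(b_j,k)=\sum_{i=1}^k b_i^*$ and equality of total degree sums; the sufficiency direction is the standard greedy/Havel--Hakimi-style induction, and your formula $\sum_{j=1}^{k}\tilde b_j^{\,*}=\sum_{j=1}^{k}b_j^*-a_1+\min(a_1,b_{k+1}^*)$ checks out (the number of deleted cells landing in the first $k$ columns is $\#\{i\le a_1: b_i\le k\}=a_1-\min(a_1,b_{k+1}^*)$, using $a_1\le b_1^*$ so that all decremented parts are positive), as does the two-case verification of $a'\preceq\tilde b^{\,*}$. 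Two small points worth making explicit if you write this up: the feasibility of the first greedy step rests on the $k=1$ instance of the hypothesis, namely $a_1\le b_1^*=\#\{j: b_j\ge 1\}\le q$; and after decrementing, $\tilde b$ may acquire zero parts, so the induction should be phrased for weakly decreasing nonnegative sequences (red vertices of residual degree $0$ simply receive no further edges), with the trivial base case $p=0$ forcing $b\equiv 0$.
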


\begin{theorem}[Grone--Merris conjecture, proved in~\cite{bai2011grone}]\label{thm:grone-merris}
The Laplacian spectrum of a graph is majorized by the 
conjugate of its degree sequence.
\end{theorem}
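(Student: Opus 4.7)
The plan is to prove the majorization $\lambda \preceq d^{*}$ by establishing, for each $k \in \{1,\dots, n\}$, the partial-sum inequality
\[
\sum_{i=1}^{k}\lambda_i \;\le\; \sum_{i=1}^{k} d_i^{*},
\]
where $d_i^{*} = |\{v : \deg(v) \ge i\}|$ is the conjugate of the degree partition. At $k = n$ both sides equal $2|E|$, so equality holds there; the content of the theorem is in the partial sums.

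The natural handle on the left-hand side is Ky Fan's maximum principle, $\sum_{i=1}^{k}\lambda_i = \max_{X} \operatorname{tr}(X^{T} L X)$, where $X$ ranges over $n\times k$ matrices with orthonormal columns. Writing $L = \sum_{uv\in E}(e_u - e_v)(e_u - e_v)^{T}$ turns this into $\max_{X}\sum_{uv\in E}\|x_u - x_v\|^{2}$, with $x_v$ the row of $X$ indexed by $v$. On the right-hand side, $\sum_{i=1}^{k} d_i^{*}$ admits the combinatorial reading as the number of pairs $(v,i)$ with $1\le i\le k$ and $\deg(v)\ge i$, i.e., the number of cells in the first $k$ rows of the Ferrers diagram of the degree sequence.

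A naive first attempt, applying subadditivity of Ky Fan norms to the edge decomposition $L = \sum_{e} L_e$, yields only $\sum_{i=1}^{k}\lambda_i \le 2|E|$, which matches the target sum only at $k=n$. A sharper attempt regroups the edges into stars centered at each vertex: the star matrix $L_v$ has spectrum $(\deg(v), 0,\dots, 0)$, so summing top-$k$ eigenvalues over all stars reproduces exactly $\sum_{i=1}^{k} d_i^{*}$. The trouble is that $\sum_v L_v = 2L$, double-counting every edge. Removing this factor of two is the heart of Bai's argument in~\cite{bai2011grone}: one orients each edge toward a single endpoint, so that the resulting "half-star" assignment matrix both sums to $L$ and retains the spectral structure needed to invoke Ky Fan edge-by-edge. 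Producing the correct orientation, and showing that the assigned matrix dominates $L$ in the Ky Fan order via a suitable interlacing or Schur-Horn-type comparison, is the delicate step, and I expect it to be the principal obstacle in the proof.
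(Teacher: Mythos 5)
The paper offers no proof of this statement: it is imported wholesale as Bai's theorem, with the single citation \cite{bai2011grone} doing all the work. So the only question is whether your sketch stands on its own, and it does not --- you say as much when you defer the ``delicate step,'' which is in fact the entire theorem. Moreover, the intermediate claims motivating the plan are wrong. The star matrix $L_v=\sum_{u\sim v}(e_u-e_v)(e_u-e_v)^{T}$ does not have spectrum $(\deg(v),0,\dots,0)$; its nonzero eigenvalues are $\deg(v)+1$ together with $1$ repeated $\deg(v)-1$ times, so its trace is $2\deg(v)$ and its top-$k$ eigenvalue sum is $\deg(v)+k$ for $1\le k\le\deg(v)$. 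Summing over $v$ therefore does not reproduce $\sum_{i=1}^{k}d_i^{*}=\sum_{v}\min(k,\deg(v))$; it overshoots. For Ky Fan subadditivity to deliver Grone--Merris from a decomposition $L=\sum_v M_v$, each $M_v$ would need to be positive semidefinite with $\operatorname{tr}(M_v)\le\deg(v)$ and $\lambda_1(M_v)\le 1$, and no assignment of whole edges to endpoints can achieve this: each edge matrix has trace $2$, so the assigned matrices carry twice the in-degree as trace, and a half-star on $j$ edges has largest eigenvalue $j+1>1$. The orientation idea fails on both required conditions, which is precisely why two decades of convexity and interlacing attacks on the conjecture fell short.

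Your attribution of the ``half-star orientation'' to Bai is also a mischaracterization. Bai's proof is not a Ky Fan decomposition argument at all; it is an induction on the number of vertices organized around the index $f=\max\{i:d_i^{*}\ge i\}$ (the size of the Durfee square of the degree sequence), with a case split according to whether $d_f^{*}-d_{f+1}^{*}$ is large or small, and it leans on earlier partial results of Grone and of Brouwer and Haemers bounding the first $f$ Laplacian eigenvalues, together with the extremal role of threshold/split graphs, for which $\lambda=d^{*}$ exactly. If your goal is to use the theorem in this paper, do what the author does and cite it; if your goal is to prove it, the route you sketch will not close, and you should either work through Bai's induction or find a genuinely new argument for the key inequality $\sum_{i=1}^{k}\lambda_i\le\sum_{i=1}^{k}d_i^{*}$ at the critical values of $k$ near $f$.
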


One might hope that the constraints provided by
these theorems would restrict~$\lambda$ enough
for Inequality~\ref{eqn:conj2} to hold.  We can
state this as the following (incorrect) conjecture.

\begin{conjecture}\label{conj:partition}
Let $n = p + q > 1$ be an integer.  Let $a\vdash p$ and $b\vdash q$ be integer partitions, and let $d=d_1\ge d_2\ge\dots\ge d_n$ be their union.
Let $\lambda=\lambda_1\ge\dots\ge\lambda_{n-1}$ be a weakly decreasing sequence of positive real
numbers.

If 
\[
  a \preceq b^*\text{\quad and\quad}d \preceq \lambda \preceq d^*
\]
then 
\[
  \frac{1}{n}\prod_{i=1}^{n-1}\lambda_i
    \le
  \frac{1}{pq}\prod_{i=1}^n d_i
\]
\end{conjecture}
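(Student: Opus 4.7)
My plan is to approach Conjecture~\ref{conj:partition} through majorization inequalities, starting from the Ferrers equality case and attempting to show that every admissible $\lambda$ yields a value $\frac{1}{n}\prod\lambda_i$ no larger than the Ferrers value $\frac{1}{pq}\prod d_i$. I would first observe that, for Ferrers graphs, the Matrix--Tree theorem combined with the Ehrenborg--van Willigenburg formula gives $\frac{1}{n}\prod_{i=1}^{n-1}\lambda_i = \frac{1}{pq}\prod_{i=1}^n d_i$, so equality is attained. I would then try a Schur-concavity argument: since $x\mapsto\prod x_i$ is Schur-concave on $\mathbb{R}_{>0}^k$, one might hope to deduce an upper bound on $\prod\lambda_i$ from the constraint $\lambda\preceq d^*$ (after reconciling the sequence lengths by padding with a zero or splitting off the smallest entry).

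The step I expect to be the main obstacle---and the reason I suspect Conjecture~\ref{conj:partition} is actually false---is precisely this Schur-concavity step. Schur-concavity says that a \emph{more balanced} sequence has a \emph{larger} product, so $\lambda\preceq d^*$ only bounds $\prod\lambda_i$ from below, not above; an upper bound of the desired form would need $\lambda$ to be the \emph{most spread} admissible sequence, but the companion constraint $d\preceq\lambda$ does not enforce this. Chasing this suspicion, take $p=q=2$ and $a=b=(2,2)$, so $d=(2,2,2,2)$ and $d^*=(4,4)$; the triple $\lambda=(8/3,8/3,8/3)$ is positive, weakly decreasing, of length $n-1=3$, and a short check of partial sums confirms $d\preceq\lambda\preceq d^*$. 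Nevertheless,
\[
\frac{1}{n}\prod_{i=1}^{n-1}\lambda_i = \frac{(8/3)^3}{4} = \frac{128}{27} > 4 = \frac{1}{pq}\prod_{i=1}^n d_i,
\]
refuting Conjecture~\ref{conj:partition}. This indicates that the proof plan is doomed as stated, and more importantly that any eventual route to Conjecture~\ref{conj:fbc} via majorization-style inequalities cannot rely solely on the constraints from Gale--Ryser and Grone--Merris; one must invoke additional structural properties enjoyed by genuine graph Laplacian spectra.
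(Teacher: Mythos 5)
Your refutation is correct and takes essentially the same approach as the paper, which likewise declares Conjecture~\ref{conj:partition} false and exhibits an explicit counterexample (due to Evan Chen, with $d=(2,2,2,1,1)$, $a=(2,2)$, $b=(2,1,1)$, $\lambda=(2,2,2,2)$, giving $16/5 \not\le 4/3$). Your witness $d=(2,2,2,2)$, $\lambda=(8/3,8/3,8/3)$ checks out ($d\preceq\lambda\preceq d^*=(4,4)$ and $128/27>4$), and your diagnosis that Schur-concavity only bounds $\prod\lambda_i$ from below under $\lambda\preceq d^*$ is precisely why the conjecture fails.
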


However,
the following simple example, provided by Evan Chen (personal communication), shows that Conjecture~\ref{conj:partition} is false.
Let $d = (2, 2, 2, 1, 1) = (2, 2) \oplus (2, 1, 1)$ (so $a = (2, 2)$ and $b = (2, 1, 1)$) and $\lambda = (2, 2, 2, 2)$.  One can verify that 
the majorization inequalities of Conjecture~\ref{conj:partition} hold.  However,
the conclusion is false:
\[
\frac{1}{5}\prod_{i=1}^4\lambda_i = \frac{16}{5}\not\le \frac{4}{3} = \frac{1}{2\cdot 3}\prod_{i=1}^5 d_i.
\]

\section{Acknowledgments}
The author is grateful to Richard Ehrenborg,
Kathryn Lybarger, Jack Schmidt, and
Duane Skaggs for helpful
conversations, and 
to Evan Chen for providing a simple
counterexample to
a more general conjecture.

\bibliography{mybib}{}
\bibliographystyle{plain}

\end{document}